\newtheorem{theorem}{Theorem}
\def\bu{\boldsymbol u}
\def\bf{\boldsymbol f}
\def\bV{\boldsymbol V}
\def\norm#1{\|#1\|}
\def\nR{\mathbb R}
\def\nN{\mathbb N}
\def\beqa{\begin{eqnarray*}}
\def\eeqa{\end{eqnarray*}}
\def\beqal{\begin{eqnarray}}
\def\eeqal{\end{eqnarray}}
\def\div{\mbox{div}\,}
\def\nR{\mathbb{R}}
\def\vn{\vec n}
\def\vt{\vec t}
\renewcommand{\div}{\operatorname{div}}
\begin{document}

\title{Inf-sup condition for Stokes with outflow condition}
\author{M. Braack\thanks{%
    Kiel University, Department of Mathematics, Kiel, Germany. \url{braack@math.uni-kiel.de}
  }
  \and T. Richter\thanks{%
    Otto von Guericke University Magdeburg, Institute for Analysis and Numerics, Magdeburg, Germany. \url{thomas.richter@ovgu.de}
  }}

\maketitle

\begin{abstract}
    The inf-sup condition is one of the essential tools in the analysis of the Stokes equations and especially in numerical analysis. In its usual form, the condition states that for every pressure $p\in L^2(\Omega)\setminus\mathbb{R}$ (i.e. with mean value zero) a velocity $\bu\in H^1_0(\Omega)^d$ can be found, so that $(\operatorname{div}\,\bu,p)=\|p\|^2$ and $\|\nabla \bu\|\le c \|p\|$ applies, where $c>0$ does not depend on $\bu$ and $p$. 

However, if we consider domains that have a Neumann-type outflow condition on part of the boundary $\Gamma_N\subset\partial\Omega$, the inf-sup condition cannot be used in this form, since the pressure here comes from $L^2(\Omega)$ and does not necessarily have zero mean value. 
In this note, we derive the inf-sup condition for the case of outflow boundaries.  
\end{abstract}

\paragraph{Keywords}
inf-sup condition, Stokes equations, outflow boundary condition

\section{Problem statement and notation}
We consider the Stokes equation
\begin{equation}\label{stokes}
-\Delta \bu + \nabla p = \bf,\quad \div\,\bu=0
\end{equation}
on a domain $\Omega\subset\mathbb{R}^d$, $d\in\nN$. We assume that the boundary of $\Omega$ consists of two parts, $\partial\Omega = \Gamma_D\cup \Gamma_N$. 
Both parts are assumed to have positive measure $|\Gamma_D|>0$, $|\Gamma_N|>0$.
On $\Gamma_D$ a Dirichlet condition and on $\Gamma_N$ a the Neumann condition
\begin{equation}\label{donothing}
\partial_{\vn} \bu -p\vn = 0\quad\mbox{on }\Gamma_N
\end{equation}
holds, where $\vn$ is the outward facing normal vector. This condition is called the \emph{do-nothing} condition and it is the natural condition for the variational formulation of the Stokes problem~\cite{HeywoodRannacherTurek1992}. 

The solution to~\eqref{stokes},~\eqref{donothing} is found in the spaces $\bu\in H^1_0(\Omega;\Gamma_D)^d$ and $p\in Q:=L^2(\Omega)$, where $H^1(\Omega)$ is the space of $L^2(\Omega)$ functions with first weak derivative in $L^2(\Omega)$. By $\bV:=H^1(\Omega;\Gamma_D)^d=\{ \bu\in H^1(\Omega)^d : \bu|_{\Gamma_D}=0\}$ we denote the vector-valued $H^1$ functions with 
vanishing  trace  on $\Gamma_D$. For the right hand side we allow 
linear functionals $\bf\in \bV'$.

In general, the pressure is not zero in average if an outflow boundary exists. On certain domains, e.g. if $d=2$ and the outflow boundary $\Gamma_N$ is  a line segment, and regular pressure, it holds (see~\cite{Rannacher_2017})
\begin{equation}\label{pres-reg}
\int_{\Gamma_N}p \,\text{d}s = 0.
\end{equation}
It is well-known that the divergence operator satisfies the inf-sup condition for the space $H^1_0(\Omega)^d\times L^2_0(\Omega)$ with $L^2_0(\Omega) = L^2(\Omega)\setminus\mathbb{R}$, see~\cite{Galdi}
Although flow problems with outflow conditions are of greatest importance in applications and the do-nothing condition is frequently used, the inf-sup condition for the spaces $\bV\times L^2(\Omega)$, is, to the best knowledge of the author, not found in literature. 

In the following,
\[
\|f\|_X:=\left(\int_X |f(x)|^2\,\text{d}x\right)^\frac{1}{2}
\]
denotes the $L^2$-norm and we use the short notation $\|\cdot\|:=\|\cdot\|_\Omega$ if the domain $\Omega$ is meant. By
\[
(f,g)_X = \int_X f(x)g(x)\,dx
\]
we denote the inner product and also use the short notation $(\cdot,\cdot):=(\cdot,\cdot)_\Omega$ on the domain $\Omega$.

\section{Main result}\label{sec:1}
\begin{theorem}
Let $\Omega\subset\nR^d$, $d\in\mathbb N$, a Lipschitz domain with boundary $\partial\Omega=\Gamma_D\cap\Gamma_N$.
The divergence operator satisfies the
inf-sup condition for this pair $\bV\times Q$, i.e.
\beqa
   \inf_{p\in Q\setminus\{0\}} \sup_{\bu\in \bV\setminus\{0\}}\frac{(\div \bu,p)}{\norm{\bu}_{\bV}\norm{p}} &=:& \gamma >0
\eeqa
\end{theorem}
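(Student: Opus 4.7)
The plan is to reduce to the classical inf-sup condition on $H^1_0(\Omega)^d\times L^2_0(\Omega)$ by decomposing the pressure into its mean value and its mean-free part, and then to compensate for the constant part using a test velocity whose normal trace on $\Gamma_N$ is nontrivial. Concretely, for $p\in Q$ I would write
\[
 p = p_0 + \bar p,\qquad \bar p := \frac{1}{|\Omega|}\int_\Omega p\,dx,\qquad p_0 := p-\bar p\in L^2_0(\Omega),
\]
so that $\norm{p}^2=\norm{p_0}^2+|\Omega|\,\bar p^{\,2}$.

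Step 1 (mean-free part). By the classical inf-sup condition on $H^1_0(\Omega)^d\times L^2_0(\Omega)$ cited from Galdi, there is $\bu_0\in H^1_0(\Omega)^d\subset\bV$ with
\[
  (\div \bu_0,p_0)\ \ge\ \beta\norm{p_0}^2,\qquad \norm{\bu_0}_{\bV}\ \le\ c\,\norm{p_0}.
\]
Since $\bu_0$ vanishes on all of $\partial\Omega$, $\int_\Omega\div\bu_0\,dx=0$, hence
$(\div\bu_0,p)=(\div\bu_0,p_0)\ge\beta\norm{p_0}^2$.

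Step 2 (constant part). The main obstacle is the construction of a fixed function $\bu_1\in\bV$ with
\[
  \int_\Omega\div\bu_1\,dx \;=\; \int_{\Gamma_N}\bu_1\cdot\vn\,ds\;=\;1.
\]
Because $\Omega$ is Lipschitz and $|\Gamma_N|>0$, a piece of $\Gamma_N$ has a well-defined normal, so one can pick a smooth vector field $\bv$ on $\partial\Omega$ supported in the relative interior of $\Gamma_N$ (hence vanishing on $\Gamma_D$) with $\int_{\Gamma_N}\bv\cdot\vn\,ds>0$; after rescaling its $H^{1/2}(\partial\Omega)^d$-lift into $\Omega$ provided by the trace theorem yields the required $\bu_1\in\bV$, and we set $C_1:=\norm{\bu_1}_{\bV}$. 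Then for every $p$
\[
  (\div\bu_1,p) \;=\; (\div\bu_1,p_0) + \bar p\,,\qquad |(\div\bu_1,p_0)|\le C_1\norm{p_0}.
\]

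Step 3 (combination). Take $\bu:=\bu_0+\alpha\bar p\,\bu_1$ with $\alpha>0$ to be chosen. Then
\[
  (\div\bu,p)\;\ge\;\beta\norm{p_0}^2 - \alpha C_1|\bar p|\,\norm{p_0} + \alpha\bar p^{\,2}.
\]
A Young inequality absorbs the cross term into $\tfrac{\beta}{2}\norm{p_0}^2$ and, for $\alpha$ small enough (depending only on $\beta,C_1,|\Omega|$), leaves a positive multiple of $\bar p^{\,2}$, yielding $(\div\bu,p)\ge\gamma_0\,\norm{p}^2$ for some $\gamma_0>0$. The norm estimate $\norm{\bu}_{\bV}\le c\norm{p_0}+\alpha|\bar p|\,C_1\le C\norm{p}$ follows from the triangle inequality and $|\bar p|\le|\Omega|^{-1/2}\norm{p}$, so dividing gives $\gamma=\gamma_0/C>0$, independent of $p$.

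The only nontrivial ingredient is Step 2; the rest is a standard sum-splitting with a Young-inequality absorption. For polygonal/polyhedral $\Omega$ the lift $\bu_1$ can be written down explicitly (e.g.\ as a cut-off of a constant vector in the normal direction), which avoids invoking the trace theorem at all.
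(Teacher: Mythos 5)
Your proposal is correct and follows essentially the same route as the paper: the same splitting $p=\bar p+p_0$, the same appeal to the classical inf-sup condition on $H^1_0(\Omega)^d\times L^2_0(\Omega)$ for the mean-free part, an auxiliary field in $\bV$ with nonzero flux through $\Gamma_N$ to control $\bar p$, and the same sum-plus-absorption combination and norm estimate. The only real difference is how the flux-carrying field is obtained: you lift a boundary datum supported in the relative interior of $\Gamma_N$ via the trace theorem (and handle signs via the scaling $\alpha\bar p\,\bu_1$, which is fine), whereas the paper builds it explicitly as $\chi_H\psi\,\vn$ on a half-ball $H=B_r(x_0)\cap\Omega$ at a point $x_0\in\Gamma_N$ — a construction that is what allows the paper to track how the constants $c_1,c_2$, and hence $\gamma$, scale with the radius $r$ and the geometry of the outflow boundary in its discussion section, information your abstract lift does not provide.
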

\begin{proof}
Let $p\in Q\setminus\{0\}$ be given.
We split the pressure in the form $p = \bar p + p_0$ where  $\bar p:=|\Omega|^{-1}\int_\Omega p$, 
and $p_0=p-\bar p\in L^2_0(\Omega):=L^2(\Omega)\setminus\nR$.
Due to the  inf-sup condition for the space $H^1_0(\Omega)^d\times L^2_0(\Omega)$,
we find $\bu_0\in H^1_0(\Omega)^d$ s.t
\beqa
    (\mbox{div }\bu_0,p_0) & \ge& \gamma_0 \norm{\nabla \bu_0}_{\bV}\norm{p_0}_Q
\eeqa
for a constant $\gamma_0>0$ which does not depend on $\bu_0$ and $p_0$. 

Next, we develop the inf-sup condition for $\bar p\in \nR$. For $x_0\in\Gamma_N$ and $r>0$ let $\vn(x_0)$ the outer normal and
\[
B_r(x_0):=\{x\in \nR^d,\; |x-x_0|<r\},\quad H(x_0,r)  := B_r(x_0)\cap \Omega ,
\]
such that $H(x_0,r)=B_r(x_0)\cap \Omega$ and $B_r(x_0)\setminus H(x_0,r)$ are two connected sets, one completely within the domain and one on the outside.
Since Lipschitz domains satisfy inner cone conditions, it exists $x_0\in\Gamma_N$ and $r>0$ s.t. 
\beqa
  H&:=&H(x_0,r)\subset\Omega.
\eeqa
We denote the normal and tangential vector at $x_0$ by $\vn$ and $\vt$, respectively.
Let $\chi_{H}$ be the characteristic function w.r.t. $H$. We extend the normal on $\Gamma_N$ to
points $x\in\Omega$ in he interior by setting
\[
\vn(x) := \vn(\hat x),\; \hat x:=\arg\min \{|\hat y-x|: \hat y\in  \Gamma_N\cap H\},
\]
and define
\[
\psi(x) := r-|x-x_0|,\quad \bar\bu(x) := \chi_H(x)\psi(x) \vn(x).
\]
$\bar\bu(x)$ vanishes outside $H$, is $C^\infty$ in $H$, and is continuous by construction. Hence, $\bar\bu\in \bV$.
We obtain by the Gauss theorem and the fact that $\bar\bu|_{\partial H\setminus\Gamma_N}=0$:
\beqa
     \int_\Omega\mbox{div }\bar\bu\,dx &=&
     \int_H\mbox{div }\bar\bu\,dx = 
     \int_{\partial H}\bar\bu\cdot \vn \,ds = \int_{\partial H\cap\Gamma_N}\bar\bu\cdot \vn\,ds
     = \int_{\partial H\cap\Gamma_N}\psi\,ds
     \\
     &=&  \int_{\partial H\cap\Gamma_N} (r-|x-x_0|) \,ds
     =:c_1>0,
\eeqa
where $c_1=c_1(r,\Gamma_N)\approx r^d$ depends on the shape of the boundary segment $\Gamma_N\cap H$. 
Furthermore,
\begin{equation}\label{est:barbu}
\|\nabla \bar\bu\|_\Omega = \|\nabla\bar  \bu\|_H \le \|\vn\|_{W^{1,\infty}(H)} \|\psi\|_{H^1(H)}
=:c_2
\end{equation}
where $c_2=c_2(r,\Gamma_N)\approx r^\frac{d}{2}$ depends on the extension of the normals from the boundary segment $\Gamma_N$ into the domain.

For $\bar \bu\in \bV$, $\bu_0\in\bV\cap H^1_0(\Omega)^d$ from above and $p=\bar p+p_0\in Q$ it holds with $\|\div\,\bu_0\|\le d \|\nabla\bu_0\|$ and~\eqref{est:barbu}
\[
\begin{aligned}
(\div\,\bar\bu,p) &= (\div\,\bar\bu,\bar p) + (\div\,\bar\bu,p_0)
\ge \frac{c_1}{|\Omega|^\frac{1}{2}} \|\bar p\|_Q - dc_2 \|p_0\|_Q,\\
(\div\,\bu_0,p) &= \underbrace{(\div\,\bu_0,\bar p)}_{=0} + (\div\,\bu_0,p_0)
\ge \gamma_0 \|\bu_0\|_{\bV} \|p_0\|_Q.
\end{aligned}
\]
For $\bu\in \bV$ defined as 
\begin{equation}\label{bu}
\bu = \bu_0 + \frac{\gamma_0}{2dc_2} \|\bu_0\|_{\bV} \bar\bu,
\end{equation}
it holds
\begin{align}\nonumber
(\div\,\bu,p) &\ge \frac{\gamma_0}{2} \|\bu_0\|_{\bV} \Big( \|p_0\|_Q + \frac{c_1}{dc_2|\Omega|^{\frac{1}{2}} }\|\bar p\|_Q\Big) \\
&\ge\frac{\gamma_0}{2}\min\left\{1,\frac{c_1}{dc_2|\Omega|^{\frac{1}{2}}}\right\} \|\bu_0\|_{\bV} \|p\|_Q.
\label{infsup0}
\end{align}
Finally, $\bu$ defined in~\eqref{bu}, can be bounded with~\eqref{est:barbu}
\[
\|\bu\|_{\bV} \le \|\bu_0\|_{\bV} + \frac{\gamma_0}{2d}\|\bu_0\|_{\bV}
=\Big(1+\frac{\gamma_0}{2d}\Big) \|\bu_0\|_{\bV}
\]
to further estimate~\eqref{infsup0}
\[
(\div\,\bu,p) \ge  \gamma \|\bu\|_{\bV} \|p\|_Q,
\]
with
\[
\gamma :=
\frac{\gamma_0}{2}\min\left\{1,\frac{c_1}{dc_2|\Omega|^{\frac{1}{2}}}\right\}
\Big(1+\frac{\gamma_0}{2d}\Big)^{-1}.
\]
\end{proof}

\section{Discussion and outlook}

The exact value of the inf-sup constant $\gamma_0$ is only known for few domains, it can, however, be approximated numerically~\cite{Gallistl}. For anisotropic rectangular domains $\Omega:=(0,L)\times (0,H)$  with $L>H$ it is known, see~\cite{infsupaniso}, that $\gamma_0$ scales as
\[
\gamma_0 = {\mathcal O}(H/L).
\]
The constant $\gamma$ from the theory mainly shows the scaling of $\gamma_0$. Besides it depends on the values $c_1$ and $c_2$ that mainly depend on the radius $r>0$ used to control the pressure coming from the outflow condition on $\Gamma_N$. As $\gamma_0$ is typically small, we can estimate
\[
\Big(1+\frac{\gamma_0}{2d}\Big)^{-1}\approx 1.
\]
To study the scaling with the size of the outflow boundary, we examine two typical configurations.:
\begin{itemize}
\item 
First, let $\Omega:=(0,L)\times (0,H)$ be a channel with $L>H$ and let $\Gamma_N=\{L\}\times (0,H)$ be the outflow boundary. Then, it holds ($d=2$, $r=H$, $c_1\approx H^2$, $c_2\approx H$)
\[
\gamma \approx \frac{H}{L} \min\left\{1,\frac{H^2}{H \sqrt{LH}}\right\}
= {\mathcal O}\left(\left(\frac{H}{L}\right)^\frac{3}{2}\right).
\]
Hence, the scaling is only slightly worse than that of the typical inf-sup constant. 
\item Second, we consider $d=3$ and a ball-shaped domain $\Omega$ with diameter $1$. 
We assume that the outflow boundary $\Gamma_N$ is a disc with diameter $r<1$. Then, we estimate
\[
\gamma \approx \gamma_0r^\frac{3}{2}.
\]
The degeneration $\gamma\to 0$ for $r\to0$ is physical, as a continuous transition from the Neumann problem to the Dirichlet problem does not hold in general, e.g. if the Dirichlet condition prescribes a net inflow
\[
\int_{\Gamma_D}\bu\cdot\vn\,ds >0. 
\]
\end{itemize}
What we have not considered in this note is the fixation of the pressure on the outflow boundary $\Gamma_N$, see~\eqref{pres-reg}. This imposes a relation between $p\in Q$ and $\bar p$ that might be utilized.


\begin{thebibliography}{1}

\bibitem{Galdi}
G.P. Galdi.
\newblock {\em An Introduction to the Mathematical Theory of the Navier-Stokes
  Equations. Steady-State Problems}.
\newblock Springer, 2 edition, 2011.

\bibitem{Gallistl}
D. Gallistl.
\newblock Rayleigh--ritz approximation of the inf-sup constant for the
  divergence.
\newblock {\em Math. Comp.}, 88(315):73--89, 2019.

\bibitem{HeywoodRannacherTurek1992}
J.G. Heywood, R.~Rannacher, and S.~Turek.
\newblock Artificial boundaries and flux and pressure conditions for the
  incompressible {N}avier-{S}tokes equations.
\newblock 22:325--352, 1992.

\bibitem{infsupaniso}
M.A. Olshanskii and E.V. Chizhonkov.
\newblock On the best constant in the inf-sup-condition for elongated
  rectangular domains.
\newblock {\em Math Notes}, 67:325–332, 2000.

\bibitem{Rannacher_2017}
R. Rannacher.
\newblock {\em Numerik 3: Probleme der Kontinuumsmechanik und ihre numerische
  Behandlung}.
\newblock Lecture Notes. Heidelberg University Publishing, 2017.

\end{thebibliography}

\end{document}